\documentclass[12pt]{article}
\usepackage{amsmath,amssymb,amsthm}
\usepackage{hyperref}
\usepackage{tikz}

\newtheorem{theorem}{Theorem}

\newtheorem{proposition}{Proposition}

\theoremstyle{definition}

\setlength{\textwidth}{6in}
\setlength{\oddsidemargin}{0.25in}
\setlength{\evensidemargin}{0.25in}
\setlength{\topmargin}{-.25in}
\setlength{\textheight}{8.5in}

\newcommand{\vertex}[2]{\node[ thick, fill=white, draw=black, circle, inner sep=2pt, shift={#2}] at #1 {};}

\begin{document}

\title{Using twins and scaling to construct cospectral graphs for the normalized Laplacian}

\author{
Steve Butler\thanks{Department of Mathematics, Iowa State University, Ames, IA 50011, USA
(butler@iastate.edu).  Partially supported by an NSA Young Investigator Grant.}}

\maketitle

\begin{abstract}
The spectrum of the normalized Laplacian matrix cannot determine the number of edges in a graph, however finding  constructions of cospectral graphs with differing number of edges has been elusive.  In this paper we use basic properties of twins and scaling to show how to construct such graphs.  We also give examples of families of graphs which are cospectral with a subgraph for the normalized Laplacian matrix.
\end{abstract}

% \begin{keywords}
% Normalized Laplacian, cospectral graphs, scaling, twins.
% \end{keywords}
% \begin{AMS}
% 05C50. 
% \end{AMS}

\section{Introduction}
The goal in spectral graph theory is to understand the interplay between the structure of a graph and the eigenvalues of a matrix that is associated with the graph.  There are many possible ways to associate a matrix with a graph and each one has something different to say about the graph.  At the same time each matrix has limitations in what can be said about a graph, this is because of the existence of \emph{cospectral graphs} which are graphs which are not isomorphic but for which the corresponding matrices have the same eigenvalues.

One way to explore the limitations of a particular matrix in spectral graph theory is through the study of cospectral graphs and identifying a structural property that distinguish these graphs (such a property cannot then a priori be determined through the spectrum).  The four most common matrices that are studied are the adjacency matrix ($A$, where the $(i,j)$-entry indicates whether or not an edge is present), the combinatorial Laplacian ($L=D-A$, where $D$ is the diagonal matrix of the degrees $d(u)$), the signless Laplacian ($Q=D+A$) and the normalized Laplacian ($\mathcal{L}=D^{-1/2}(D-A)D^{-1/2}$ when the graph has no vertices of degree $0$; when the graph does have vertices of degree $0$ set the corresponding entry of $D^{-1/2}$ to $0$).  Information about these matrices can be obtained in several places including the work of Brouwer and Haemers \cite{brouwer} and Chung \cite{chung}.  

In Table~\ref{tab:weak}, taken from \cite{handbook}, for each matrix we indicate whether or not a particular type of matrix can detect a certain structural property.  Here ``Bip.'' stands for bipartite and ``Comp.'' stands for components.

\begin{table}[hftb]
\centering
\caption{Matrices and properties of a graph}
\label{tab:weak}
\begin{tabular}{|c||c|c|c|c|}\hline
Matrix&Bip.&\# Comp.&\# Bip.\ Comp.&\# Edges\\ \hline\hline
$A$&Yes&No&No&Yes\\ \hline
$L$&No&Yes&No&Yes\\ \hline
$Q$&No&No&Yes&Yes\\ \hline
$\mathcal{L}$&Yes&Yes&Yes&No\\ \hline
\end{tabular}
\end{table}

One of the striking things is that while the normalized Laplacian does well in detecting \emph{qualitative} structure (i.e., expansion \cite{chung}) it does poorly in some simple \emph{quantitative} values (i.e., counting the number of edges).  In some sense this follows because the normalized Laplacian matrix is up to a simple transformation related to the probability transition matrix of a random walk, which is given by $D^{-1}A$.  In particular we have that $D^{-1}A=D^{-1/2}(I-\mathcal{L})D^{1/2}$ and so two graphs are cospectral for the normalized Laplacian if and only if they are also cospectral with respect to the probability transition matrix for a random walk.  (This latter relationship is useful in checking cospectrality in some cases, and we will use it later in this paper.)

There have been several papers which have addressed constructions of cospectral graphs for the normalized Laplacian (see \cite{butler,grout,cavers,osborne}).  However none of these constructions yielded graphs with differing number of edges, and previously the only large graphs that were known which were cospectral and had differing number of edges were complete bipartite graphs.

The goal of this paper is to begin to address this situation by giving simple methods to construct cospectral graphs with respect to the normalized Laplacian matrix and which have differing number of edges.  This will be based on two aspects for the normalized Laplacian, namely scaling and twins (which we introduce in Section~\ref{sec:scaling} and then extend in Section~\ref{sec:twins}).  We also give examples of graphs which are cospectral with a subgraph (see Section~\ref{sec:subgraph}).  We then finish with some concluding remarks (see Section~\ref{sec:concluding}).

While our goal is to construct graphs which are simple, we will find it useful and informative to work with weighted graphs.  A weighted graph is a graph with an additional weight function on the edges, $w(u,v)$, which is non-negative and symmetric (i.e., $w(u,v)=w(v,u)$).  The adjacency matrix of a weighted graph is defined using the weight function by $A_{u,v}=w(u,v)$.  In Section~\ref{sec:twins} we will also need vertex weights which are nonnegative functions $w(u)$ (introduced for use in the normalized Laplacian in \cite{butler2}).  The degree of the vertex then becomes the sum of the vertex weight and the incident edges, i.e., $d(u)=w(u)+\sum_{v{\sim}u}w(u,v)$.  With this degree we can now define the diagonal degree matrix $D$ and along with $A$ can define $L$, $Q$ and $\mathcal{L}$.  We note that a vertex weight does not correspond to a loop, i.e., a loop affects both $D$ and $A$ while a vertex weight only affects $D$.  Simple graphs correspond to the situation when $w(u,v)\in\{0,1\}$ for all edges $u{\sim}v$ and $w(u)=0$ for all vertices $u$.

\section{Scaling and twins}\label{sec:scaling}
In this section we will introduce our two basic principles which when combined will allow us to construct cospectral graphs with differing number of edges.  The first one has to do with scaling and exploits the normalization aspect of the normalized Laplacian.

\begin{proposition}\label{prop:scaling}
Let $\alpha>0$, and let $G$ and $\alpha G$ be graphs on the same vertex set where  $w_{\alpha G}(u,v)=\alpha w_{G}(u,v)$ and $w_{\alpha G}(u)=\alpha w_{G}(u)$.  Then $\mathcal{L}_G=\mathcal{L}_{\alpha G}$, in particular the (weighted) graphs are cospectral with respect to the normalized Laplacian.
\end{proposition}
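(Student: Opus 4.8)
The plan is to compute $\mathcal{L}_{\alpha G}$ directly from the definitions and watch the scaling factor $\alpha$ cancel. First I would record how scaling affects the two ingredients of the normalized Laplacian. Since every edge weight is multiplied by $\alpha$, the adjacency matrix satisfies $A_{\alpha G}=\alpha A_G$. For the degrees I would use the degree formula $d(u)=w(u)+\sum_{v\sim u}w(u,v)$ given in the introduction: scaling both the vertex weight and every incident edge weight by $\alpha$ yields $d_{\alpha G}(u)=\alpha w_G(u)+\alpha\sum_{v\sim u}w_G(u,v)=\alpha\,d_G(u)$, so the diagonal degree matrices obey $D_{\alpha G}=\alpha D_G$.

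Next I would substitute these into the definition of the normalized Laplacian and collect the powers of $\alpha$. Writing everything out,
\[
\mathcal{L}_{\alpha G}=D_{\alpha G}^{-1/2}\bigl(D_{\alpha G}-A_{\alpha G}\bigr)D_{\alpha G}^{-1/2}=(\alpha D_G)^{-1/2}\,\alpha(D_G-A_G)\,(\alpha D_G)^{-1/2},
\]
and the scalar $\alpha$ pulls through each matrix factor. Collecting the exponents gives $\alpha^{-1/2}\cdot\alpha\cdot\alpha^{-1/2}=\alpha^{0}=1$, so that $\mathcal{L}_{\alpha G}=\mathcal{L}_G$. Because this is an honest equality of matrices rather than a mere similarity, the two graphs trivially have the same spectrum, which is the cospectrality asserted.

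The only step that is not pure algebra is the convention for vertices of degree $0$, where $(\alpha D_G)^{-1/2}$ cannot be literally factored as $\alpha^{-1/2}D_G^{-1/2}$ on the null diagonal entries. Here I would observe that, since $\alpha>0$, a vertex has degree $0$ in $G$ precisely when it has degree $0$ in $\alpha G$, and in both cases the convention forces the corresponding entry of $D^{-1/2}$ to be $0$. Thus the two matrices agree on these coordinates as well, and $\mathcal{L}_{\alpha G}=\mathcal{L}_G$ holds in full generality. I do not anticipate any genuine obstacle: the statement is essentially the homogeneity of the normalization under uniform scaling, and the degree-$0$ bookkeeping is the only point requiring a word of care.
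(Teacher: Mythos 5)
Your proof is correct and follows essentially the same route as the paper: scale $D$ and $A$ by $\alpha$, substitute into $\mathcal{L}=D^{-1/2}(D-A)D^{-1/2}$, and watch the powers of $\alpha$ cancel to get an equality of matrices. Your extra remark about the degree-$0$ convention is a careful touch the paper omits, but it does not change the argument.
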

\begin{proof}
This follows because the degrees also scale by $\alpha$ and because of the definition of the normalized Laplacian, i.e., $\mathcal{L}=D^{-1/2}(D-A)D^{-1/2}$.  So we have
\begin{multline*}
\mathcal{L}_{\alpha G}=D_{\alpha G}^{-1/2}(D_{\alpha G}-A_{\alpha G})D_{\alpha G}^{-1/2}=\big(\frac{1}{\sqrt\alpha}D_G^{-1/2}\big)\big(\alpha(D_G-A_G)\big)\big(\frac{1}{\sqrt\alpha}D_G^{-1/2}\big)
\\=D_G^{-1/2}(D_G-A_G)D_G^{-1/2}=\mathcal{L}_G,
\end{multline*}
as desired.
\end{proof}

The second principle has to do with twin vertices.  In a simple graph twin vertices are two disjoint vertices that have the same set of neighbors.  For a weighted graph we will have a similar notion but we add the requirements that the edge weights scale.  Therefore we say $u$ and $v$ are \emph{twin vertices} if $u$ and $v$ are not adjacent, not isolated, and there is some $\alpha>0$ so that $\alpha w(u)=w(v)$, $\alpha w(u,u)= w(v,v)$, and for all $t\ne u,v$ we have $\alpha w(u,t)= w(v,t)$.  (This generalizes the notion of twins as introduced in \cite{handbook} and \cite{osborne}.)

As an example, let us consider the weighted graph shown in Figure~\ref{fig:vertextwins}.  All the vertex weights are zero, and the edge weights are $1$ unless otherwise marked (we will follow this convention throughout the paper).  Then we have that the vertices $a$ and $b$ are twins with the scaling factor $\alpha=2$ but that $a$ and $c$ are not twins because even though they have the same neighbors there is no consistent scaling factor for the edge weights.  In general we note that the twins relationship gives an equivalence relationship for the vertices of the graph, and moving forward the idea of grouping twins in these equivalence classes and then coalescing will be the idea behind our construction.  

\begin{figure}[htbf]
\centering
\begin{tikzpicture}[scale=0.55]
\draw[thick] (2,0)--(0,3)--(4,0)--(3,3)--(2,0)--(6,3)--(4,0)--(6,0);
\vertex{(2,0)}{(0,0)}
\vertex{(4,0)}{(0,0)}
\vertex{(6,0)}{(0,0)}
\vertex{(0,3)}{(0,0)}
\vertex{(3,3)}{(0,0)}
\vertex{(6,3)}{(0,0)}
\node at (0,3.5) {$a$};
\node at (3,3.5) {$b$};
\node at (6,3.5) {$c$};
\node[fill=white, inner sep=1.5pt] at (8/3,2) {\small $2$};
\node[fill=white, inner sep=1.5pt] at (10/3,2) {\small $2$};
\node[fill=white, inner sep=1.5pt] at (16/3,2) {\small $2$};
\end{tikzpicture}
\caption{Example of twin vertices}
\label{fig:vertextwins}
\end{figure}
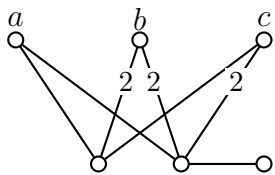

\begin{proposition}\label{prop:twins}
Let $G$ be a graph with twin vertices $u$ and $v$, let $\widehat{G}$ be a graph with $u$ and $v$ deleted and a new vertex $uv$ added where $w_{\widehat{G}}(uv,t)=w_G(u,t)+w_G(v,t)$ and $w_{\widehat{G}}(uv)=w_{G}(u)+w_G(v)$. Then the eigenvalues of the normalized Laplacian of $G$, counting multiplicity, are the eigenvalues of the normalized Laplacian of $\widehat{G}$ with an additional eigenvalue of $1$.
\end{proposition}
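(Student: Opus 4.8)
The plan is to realize $\mathbb{R}^{V(G)}$ as an orthogonal direct sum of two $\mathcal{L}_G$-invariant subspaces: a one-dimensional space carrying the extra eigenvalue $1$, and a complement that is isometric to $\mathbb{R}^{V(\widehat{G})}$ in a way that carries $\mathcal{L}_{\widehat G}$ onto the restriction of $\mathcal{L}_G$. It is convenient to write $\mathcal{L}=I-N$ with $N:=D^{-1/2}AD^{-1/2}$, so that $N_{s,t}=w(s,t)/\sqrt{d(s)d(t)}$ and the eigenvalues of $\mathcal{L}$ are exactly $1$ minus those of $N$; it then suffices to intertwine $N_G$ with $N_{\widehat G}$ and to locate a single extra eigenvalue $0$ of $N_G$. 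First I would record the degree bookkeeping forced by the twin condition: summing $\alpha w(u,t)=w(v,t)$ over $t$ (together with $\alpha w(u)=w(v)$ and $w(u,v)=0$) gives $d_G(v)=\alpha\,d_G(u)$, while the merge leaves all other degrees unchanged, $d_{\widehat G}(t)=d_G(t)$ for $t\neq uv$, and produces $d_{\widehat G}(uv)=d_G(u)+d_G(v)=(1+\alpha)d_G(u)$.

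Next I would define the embedding $\Phi\colon\mathbb{R}^{V(\widehat G)}\to\mathbb{R}^{V(G)}$ by $(\Phi y)_t=y_t$ for $t\neq u,v$ and $(\Phi y)_u=\tfrac{1}{\sqrt{1+\alpha}}\,y_{uv}$, $(\Phi y)_v=\tfrac{\sqrt\alpha}{\sqrt{1+\alpha}}\,y_{uv}$, where the factors are chosen precisely so that $\Phi$ is an isometry onto its image. Its orthogonal complement is the line spanned by $f:=\sqrt\alpha\,e_u-e_v$, and since the dimension of $\mathrm{im}\,\Phi$ equals $|V(\widehat G)|=|V(G)|-1$ we obtain an orthogonal decomposition $\mathbb{R}^{V(G)}=\mathrm{im}\,\Phi\oplus\mathrm{span}(f)$.

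The two things left to check are that $f$ is an eigenvector and that $\Phi$ intertwines the two matrices. For the former, assuming $u,v$ carry no loops (the case consistent with the stated eigenvalue, since a loop would contribute $w(u,u)/d_G(u)$ to the diagonal of $N_G$ and shift the value away from $1$), one verifies $N_G f=0$ coordinate by coordinate: at $t\neq u,v$ the two terms cancel because $w(t,v)=\alpha w(t,u)$ and $d_G(v)=\alpha d_G(u)$, and at the coordinates $u,v$ the only possible contributions vanish because $u\not\sim v$; hence $\mathcal{L}_G f=f$. For the latter---this is the main computation and the real obstacle---I would verify $N_G\Phi=\Phi N_{\widehat G}$ by evaluating both sides on each basis vector $e_{t_0}$ with $t_0\neq uv$ and on $e_{uv}$, then comparing every coordinate. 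The verification hinges on the identities $w_{\widehat G}(uv,t)=w_G(u,t)+w_G(v,t)=(1+\alpha)w_G(u,t)$ and $d_{\widehat G}(uv)=(1+\alpha)d_G(u)$, so that the factors of $(1+\alpha)$ from the merged edge weight and degree combine with the $1/\sqrt{1+\alpha}$ and $\sqrt\alpha/\sqrt{1+\alpha}$ weights built into $\Phi$ to reproduce exactly the entries of $N_G$; the cases split according to whether the coordinate being read off is one of $u,v$ or not.

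Once the intertwining holds, $\mathrm{im}\,\Phi$ is $N_G$-invariant, hence so is its orthogonal complement $\mathrm{span}(f)$ since $N_G$ is symmetric, confirming the splitting. Because $\Phi$ is an isometry carrying $N_{\widehat G}$ to the restriction of $N_G$ on $\mathrm{im}\,\Phi$, that restriction has exactly the eigenvalues of $\mathcal{L}_{\widehat G}$ with multiplicity, while $\mathrm{span}(f)$ contributes the single eigenvalue $1$; passing back through $\mathcal{L}=I-N$ gives the claimed equality of spectra. The care needed throughout is simply tracking the $\sqrt{1+\alpha}$ and $\sqrt\alpha$ normalizations and handling the diagonal $(u,u)$ entry in the loop-free setting.
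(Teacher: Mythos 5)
Your proof is correct, but it takes a genuinely different route from the paper's. The paper never argues Proposition~2 directly: it derives it as the special case $V_1=\{u\}$, $V_2=\{v\}$ of the general twin-subgraph theorem (Theorem~2), whose proof works with \emph{harmonic} eigenvectors, i.e., solutions of $A\mathbf{y}=(1-\lambda)D\mathbf{y}$. There, each harmonic eigenvector of $\widehat{G}$ is lifted to $G$ by giving $u$ and $v$ the common value at the merged vertex, each harmonic eigenvector of the second quotient graph is lifted to a vector supported on $\{u,v\}$ with weights $(\alpha,-1)$, orthogonality is checked in the $D$-weighted inner product, and a dimension count finishes the argument; the extra eigenvalue $1$ appears because the second quotient is a single loopless vertex with positive vertex weight, whose normalized Laplacian is $[1]$. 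Your construction is exactly the $D^{1/2}$-conjugate of this: your isometry $\Phi$ (with ratios $1/\sqrt{1+\alpha}$ and $\sqrt{\alpha}/\sqrt{1+\alpha}$, i.e., proportional to $\sqrt{d(u)}:\sqrt{d(v)}$) is the image under $D_G^{1/2}$ of the paper's constant-on-twins lift, and your complement vector $f=\sqrt{\alpha}\,e_u-e_v$ is, up to scale, the $D_G^{1/2}$-image of the paper's signed lift $\alpha e_u-e_v$. What your packaging buys: working with the symmetric matrix $N=D^{-1/2}AD^{-1/2}$ and a single intertwining isometry treats all eigenvectors at once, makes orthogonality and multiplicity bookkeeping automatic (invariant subspaces of a symmetric matrix), and yields a self-contained proof of the proposition with no appeal to harmonic eigenvectors. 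What the paper's route buys is generality: the same lifting machinery proves Theorem~2 for arbitrary twin subgraphs glued along $V_3$, of which the proposition is a one-line corollary. One point in your favor: you state explicitly the hypothesis $w(u,u)=w(v,v)=0$ needed for the extra eigenvalue to be exactly $1$; the paper's definition of twins permits loops, and its proof of the proposition quietly assumes they are absent.
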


The proposition is a special case of a more general result given in the next section, so we will postpone the proof.  We also note that while we dealt with two twin vertices, we can repeatedly apply this result when we have many vertices which form an equivalence class of twin vertices.  In this case we simply combine the set of twin vertices into a single vertex and then add all the corresponding weights together, this will also create $1$ as an eigenvalue with multiplicity one less than the number of initial twins.

We are now ready to give our approach, which will be to form large bipartite graphs with many groups of equivalence classes of twin vertices, and then apply the previous propositions.  We will show that when properly done the resulting graphs are cospectral.

\begin{theorem}\label{thm:collapse}
Let $G$ and $H$ be (simple) graphs on $n$ vertices such that when all possible twin vertices are combined then the resulting weighted graphs $G'$ and $H'$ satisfy some isomorphism $H'=\alpha G'$ (i.e., by rescaling the weights then we can go from one graph to the other).  Then $G$ and $H$ are cospectral.
\end{theorem}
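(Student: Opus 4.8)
The plan is to combine the two propositions that precede the theorem in a purely bookkeeping fashion. The key observation is that both $G$ and $H$, being simple graphs on the same vertex set size, can be reduced by the twin-collapsing operation of Proposition~\ref{prop:twins} to their respective weighted quotient graphs $G'$ and $H'$, and each such collapse contributes only extra eigenvalues equal to $1$. So my first step would be to track carefully what the spectrum of $G$ looks like after all twins are combined: if $G$ has equivalence classes of twins of sizes $k_1,k_2,\dots,k_m$ (with $\sum k_i = n$), then repeatedly applying Proposition~\ref{prop:twins} shows that the normalized Laplacian spectrum of $G$ consists of the spectrum of $G'$ together with the eigenvalue $1$ appearing with multiplicity $\sum_i(k_i-1) = n - m$, where $m$ is the number of vertices of $G'$.

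The second step is to do the identical accounting for $H$, obtaining that the spectrum of $H$ is the spectrum of $H'$ together with the eigenvalue $1$ with some multiplicity $n - m'$, where $m'$ is the number of vertices of $H'$. Now I would invoke the isomorphism hypothesis $H' = \alpha G'$. Since an isomorphism preserves the vertex count, we immediately get $m' = m$, so the eigenvalue $1$ is contributed with exactly the same multiplicity $n-m$ on both sides. Then Proposition~\ref{prop:scaling} applies directly: because $H'$ is obtained from $G'$ by scaling all weights (vertex and edge) by the common factor $\alpha>0$, we have $\mathcal{L}_{H'} = \mathcal{L}_{G'}$ (up to the relabeling given by the isomorphism), hence $G'$ and $H'$ have identical normalized Laplacian spectra.

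Putting the pieces together, the spectrum of $G$ is $\operatorname{spec}(\mathcal{L}_{G'})$ together with $1$ of multiplicity $n-m$, and the spectrum of $H$ is $\operatorname{spec}(\mathcal{L}_{H'}) = \operatorname{spec}(\mathcal{L}_{G'})$ together with $1$ of the same multiplicity $n-m$. These two multisets coincide, so $G$ and $H$ are cospectral, which is what the theorem claims. The only subtlety worth stating carefully, and where I expect the main obstacle to lie, is the bookkeeping of the eigenvalue $1$: I must make sure that when I merge the ``extra'' copies of $1$ coming from the twin-collapsing of $G$ and $H$ with whatever copies of $1$ may already appear inside $\operatorname{spec}(\mathcal{L}_{G'})$ itself, the total multiplicities still match. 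This is guaranteed precisely because $m = m'$ forces equal numbers of added $1$'s, and the shared quotient spectrum supplies the rest; but it is the one step that deserves an explicit multiplicity count rather than a hand-wave. Everything else follows from chaining Propositions~\ref{prop:scaling} and~\ref{prop:twins}.
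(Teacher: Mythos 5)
Your proposal is correct and follows essentially the same route as the paper: repeatedly collapse twins via Proposition~\ref{prop:twins} (each collapse adding an eigenvalue $1$), use the isomorphism $H'=\alpha G'$ to conclude the two graphs shed the same number of vertices and hence contribute equally many $1$'s, and finish with Proposition~\ref{prop:scaling} to match the remaining spectra. Your explicit multiplicity bookkeeping ($n-m$ added $1$'s, with $m=m'$ forced by the isomorphism) is just a more careful writing of the paper's one-line counting argument.
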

\begin{proof}
Since after reduction $G'$ and $H'$ have the same number of vertices then it must be that we had the same number of twin vertices to reduce in each graph, and by Proposition~\ref{prop:twins} each reduction contributed $1$ to the spectrum.  But the remaining eigenvalues, i.e., those determined by $G'$ and $H'$ also agree by Proposition~\ref{prop:scaling}.  Therefore the original graphs are cospectral.
\end{proof}

An example of Theorem~\ref{thm:collapse} is shown in Figure~\ref{fig:collapse1} where on the top row we show two simple graphs and then below them the corresponding (weighted) graphs when they have coalesced their twin vertices.  In particular the two coalesced graphs are, up to a scaling factor in the edge weights, the same and so are cospectral, and since in each case we reduced the number of vertices by $6$ we can conclude that the graphs in the first row are also cospectral.  Further it is easy to see that the graph on the left has $18$ edges while the graph on the right has $24$ edges.

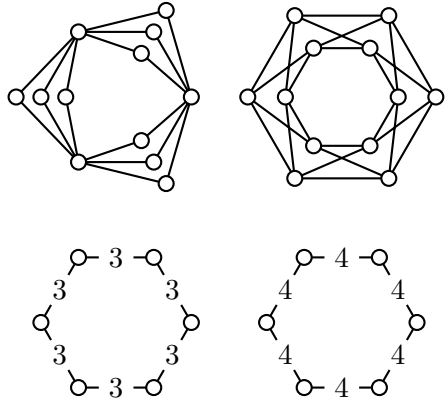
\begin{figure}[htbf]
\centering
\begin{tikzpicture}[scale=1]
\draw[thick] (0:1)--(60:.67)--(120:1)--(180:.67)--(240:1)--(300:.67)--(0:1)--(60:1)--(120:1)--(180:1)--(240:1)--(300:1)--(0:1)--(60:1.33)--(120:1)--(180:1.33)--(240:1)--(300:1.33)--cycle;
\vertex{(0:1)}{(0,0)}
\vertex{(60:0.67)}{(0,0)}
\vertex{(60:1)}{(0,0)}
\vertex{(60:1.33)}{(0,0)}
\vertex{(120:1)}{(0,0)}
\vertex{(180:0.67)}{(0,0)}
\vertex{(180:1)}{(0,0)}
\vertex{(180:1.33)}{(0,0)}
\vertex{(240:1)}{(0,0)}
\vertex{(300:0.67)}{(0,0)}
\vertex{(300:1)}{(0,0)}
\vertex{(300:1.33)}{(0,0)}

\draw[thick, shift={(0,-3)}] (0:1)--(60:1)--(120:1)--(180:1)--(240:1)--(300:1)--cycle;
\vertex{(0:1)}{(0,-3)}
\vertex{(60:1)}{(0,-3)}
\vertex{(120:1)}{(0,-3)}
\vertex{(180:1)}{(0,-3)}
\vertex{(240:1)}{(0,-3)}
\vertex{(300:1)}{(0,-3)}
\node[circle,fill=white,inner sep=1.5pt, shift={(0,-3)}] at (30:0.866) {\small $3$};
\node[circle,fill=white,inner sep=1.5pt, shift={(0,-3)}] at (90:0.866) {\small $3$};
\node[circle,fill=white,inner sep=1.5pt, shift={(0,-3)}] at (150:0.866) {\small $3$};
\node[circle,fill=white,inner sep=1.5pt, shift={(0,-3)}] at (210:0.866) {\small $3$};
\node[circle,fill=white,inner sep=1.5pt, shift={(0,-3)}] at (270:0.866) {\small $3$};
\node[circle,fill=white,inner sep=1.5pt, shift={(0,-3)}] at (330:0.866) {\small $3$};

\draw[thick,shift={(3,0)}] (0:0.75)--(60:0.75)--(120:0.75)--(180:0.75)--(240:0.75)--(300:0.75)--(0:0.75)--(60:1.25)--(120:0.75)--(180:1.25)--(240:0.75)--(300:1.25)--(0:1.25)--(60:0.75)--(120:1.25)--(180:0.75)--(240:1.25)--(300:0.75)--(0:1.25)--(60:1.25)--(120:1.25)--(180:1.25)--(240:1.25)--(300:1.25)--cycle;
\vertex{(0:0.75)}{(3,0)}
\vertex{(0:1.25)}{(3,0)}
\vertex{(60:0.75)}{(3,0)}
\vertex{(60:1.25)}{(3,0)}
\vertex{(120:0.75)}{(3,0)}
\vertex{(120:1.25)}{(3,0)}
\vertex{(180:0.75)}{(3,0)}
\vertex{(180:1.25)}{(3,0)}
\vertex{(240:0.75)}{(3,0)}
\vertex{(240:1.25)}{(3,0)}
\vertex{(300:0.75)}{(3,0)}
\vertex{(300:1.25)}{(3,0)}

\draw[thick, shift={(3,-3)}] (0:1)--(60:1)--(120:1)--(180:1)--(240:1)--(300:1)--cycle;
\vertex{(0:1)}{(3,-3)}
\vertex{(60:1)}{(3,-3)}
\vertex{(120:1)}{(3,-3)}
\vertex{(180:1)}{(3,-3)}
\vertex{(240:1)}{(3,-3)}
\vertex{(300:1)}{(3,-3)}
\node[circle,fill=white,inner sep=1.5pt, shift={(3,-3)}] at (30:0.866) {\small $4$};
\node[circle,fill=white,inner sep=1.5pt, shift={(3,-3)}] at (90:0.866) {\small $4$};
\node[circle,fill=white,inner sep=1.5pt, shift={(3,-3)}] at (150:0.866) {\small $4$};
\node[circle,fill=white,inner sep=1.5pt, shift={(3,-3)}] at (210:0.866) {\small $4$};
\node[circle,fill=white,inner sep=1.5pt, shift={(3,-3)}] at (270:0.866) {\small $4$};
\node[circle,fill=white,inner sep=1.5pt, shift={(3,-3)}] at (330:0.866) {\small $4$};
\end{tikzpicture}
\caption{An example of Theorem~\ref{thm:collapse}}
\label{fig:collapse1}
\end{figure}

This example can be generalized by fixing a bipartite graph and ``blowing'' it up in two different ways.  Suppose that $G$ is a bipartite graph with no isolated vertices and where the vertices have been partitioned as $A\cup B$ with $|A|=a$ and $|B|=b$.  Then for any natural numbers $s,s',t,t'$ such that $as+bt=as'+bt'$ we may construct the following cospectral pair. We construct $H$ by starting with $G$ and replacing each vertex in $A$ with $s$ independent vertices, each vertex in $B$ with $t$ independent vertices, and each edge in $G$ with  a copy of  $K_{s,t}$ connecting the corresponding vertices.  Similarly we  construct $H'$ by starting with $G$ and replacing each vertex in $A$ with $s'$ independent vertices, each vertex in $B$ with $t'$ independent vertices, and each edge in $G$ with a copy of $K_{s',t'}$. Both graphs have the same number of initial vertices (by assumption) and further upon coalescing the twin vertices we have introduced, the graphs are $stG$ and $s't'G$ which are cospectral and hence the initial graphs were also cospectral.  Also we note when $st\ne s't'$ then the corresponding graphs will have differing number of edges.

The graphs shown in Figure~\ref{fig:collapse1} are an example of this construction with $G$ being the $6$-cycle and $s=1$, $t=3$ and $s'=t'=2$.  Of course more interesting possibilities can occur.  In the top of Figure~\ref{fig:6cycle} we give an example of another way to build off of a $6$-cycle.  To simplify the picture we will mark the number of vertices that we blow up inside each vertex and then edges represent complete bipartite graphs.  Using the above techniques we first coalesce the twins coming from the blowup of $C_6$ (in total we reduce from $24$ to $6$ vertices in both cases; also note the new edge weight equals the product of the number of twins on each side of the blowup).  We now have the weighted graphs at the bottom of Figure~\ref{fig:6cycle} which differ by a scaling factor of $\alpha=\frac89$ and so can conclude that the original graphs were cospectral.

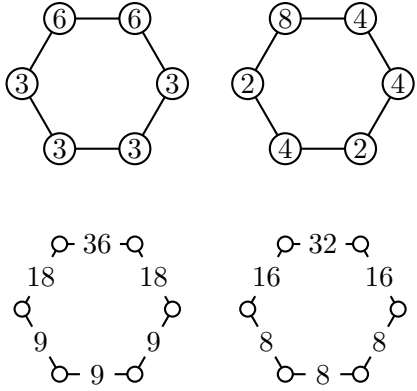
\begin{figure}[htbf]
\centering
\begin{tikzpicture}[scale=1]
\draw[thick] (0:1)--(60:1)--(120:1)--(180:1)--(240:1)--(300:1)--cycle;
\node[ thick, fill=white, draw=black, circle, inner sep=1pt, shift={(0,0)}] at (0:1) {\small $3$};
\node[ thick, fill=white, draw=black, circle, inner sep=1pt, shift={(0,0)}] at (60:1) {\small $6$};
\node[ thick, fill=white, draw=black, circle, inner sep=1pt, shift={(0,0)}] at (120:1) {\small $6$};
\node[ thick, fill=white, draw=black, circle, inner sep=1pt, shift={(0,0)}] at (180:1) {\small $3$};
\node[ thick, fill=white, draw=black, circle, inner sep=1pt, shift={(0,0)}] at (240:1) {\small $3$};
\node[ thick, fill=white, draw=black, circle, inner sep=1pt, shift={(0,0)}] at (300:1) {\small $3$};

\draw[thick, shift={(3,0)}] (0:1)--(60:1)--(120:1)--(180:1)--(240:1)--(300:1)--cycle;
\node[ thick, fill=white, draw=black, circle, inner sep=1pt, shift={(3,0)}] at (0:1) {\small $4$};
\node[ thick, fill=white, draw=black, circle, inner sep=1pt, shift={(3,0)}] at (60:1) {\small $4$};
\node[ thick, fill=white, draw=black, circle, inner sep=1pt, shift={(3,0)}] at (120:1) {\small $8$};
\node[ thick, fill=white, draw=black, circle, inner sep=1pt, shift={(3,0)}] at (180:1) {\small $2$};
\node[ thick, fill=white, draw=black, circle, inner sep=1pt, shift={(3,0)}] at (240:1) {\small $4$};
\node[ thick, fill=white, draw=black, circle, inner sep=1pt, shift={(3,0)}] at (300:1) {\small $2$};

\draw[thick, shift={(0,-3)}] (0:1)--(60:1)--(120:1)--(180:1)--(240:1)--(300:1)--cycle;
\vertex{(0:1)}{(0,-3)}
\vertex{(60:1)}{(0,-3)}
\vertex{(120:1)}{(0,-3)}
\vertex{(180:1)}{(0,-3)}
\vertex{(240:1)}{(0,-3)}
\vertex{(300:1)}{(0,-3)}
\node[circle,fill=white,inner sep=1.5pt, shift={(0,-3)}] at (30:0.866) {\small $18$};
\node[circle,fill=white,inner sep=1.5pt, shift={(0,-3)}] at (90:0.866) {\small $36$};
\node[circle,fill=white,inner sep=1.5pt, shift={(0,-3)}] at (150:0.866) {\small $18$};
\node[circle,fill=white,inner sep=1.5pt, shift={(0,-3)}] at (210:0.866) {\small $9$};
\node[circle,fill=white,inner sep=1.5pt, shift={(0,-3)}] at (270:0.866) {\small $9$};
\node[circle,fill=white,inner sep=1.5pt, shift={(0,-3)}] at (330:0.866) {\small $9$};

\draw[thick, shift={(3,-3)}] (0:1)--(60:1)--(120:1)--(180:1)--(240:1)--(300:1)--cycle;
\vertex{(0:1)}{(3,-3)}
\vertex{(60:1)}{(3,-3)}
\vertex{(120:1)}{(3,-3)}
\vertex{(180:1)}{(3,-3)}
\vertex{(240:1)}{(3,-3)}
\vertex{(300:1)}{(3,-3)}
\node[circle,fill=white,inner sep=1.5pt, shift={(3,-3)}] at (30:0.866) {\small $16$};
\node[circle,fill=white,inner sep=1.5pt, shift={(3,-3)}] at (90:0.866) {\small $32$};
\node[circle,fill=white,inner sep=1.5pt, shift={(3,-3)}] at (150:0.866) {\small $16$};
\node[circle,fill=white,inner sep=1.5pt, shift={(3,-3)}] at (210:0.866) {\small $8$};
\node[circle,fill=white,inner sep=1.5pt, shift={(3,-3)}] at (270:0.866) {\small $8$};
\node[circle,fill=white,inner sep=1.5pt, shift={(3,-3)}] at (330:0.866) {\small $8$};

\end{tikzpicture}
\caption{A more interesting blowup of the $6$-cycle}
\label{fig:6cycle}
\end{figure}

\section{Twin subgraphs}\label{sec:twins}
The preceding section looked at the idea of combining vertices together, the goal of this section is to look at ways to combine larger structures, which we will term \emph{twin subgraphs} together.  We will start by looking at the example of twin subgraphs shown in Figure~\ref{fig:twinsubgraphs}, where we have marked the twin subgraphs $H^{(1)}$ and $H^{(2)}$.  The basic idea is that these two subgraphs, along with how they connect to the remainder of the graph, agree up to a scaling factor.

\begin{figure}[htb]
\centering
\begin{tikzpicture}
\draw[thick] (2,0)--(0,0)--(0,1)--(1,1)--(1,-1)--(0,-1)--(0,0);
\vertex{(0,0)}{(0,0)}
\vertex{(0,1)}{(0,0)}
\vertex{(1,1)}{(0,0)}
\vertex{(1,0)}{(0,0)}
\vertex{(0,-1)}{(0,0)}
\vertex{(1,-1)}{(0,0)}
\vertex{(2,0)}{(0,0)}

\node[circle,fill=white,inner sep=1.5pt] at (0,-0.5) {\small $2$};
\node[circle,fill=white,inner sep=1.5pt] at (1,-0.5) {\small $2$};
\node[circle,fill=white,inner sep=1.5pt] at (0.5,-1) {\small $2$};

\node at (-0.25,1) {$a$};
\node at (-0.25,0) {$c$};
\node at (-0.25,-1) {$a'\,$};
\node at (1.25,1) {$b$};
\node at (1.25,0.25) {$d$};
\node at (1.25,-1) {$\,b'$};
\node at (2.25,0) {$e$};

\draw[rounded corners=4pt,dashed,color=black!50] (-0.5,0.75)--(-0.5,1.25)--(1.5,1.25)--(1.5,0.75)--cycle;
\draw[rounded corners=4pt,dashed,color=black!50] (-0.5,-0.75)--(-0.5,-1.25)--(1.5,-1.25)--(1.5,-0.75)--cycle;

\node at (2,1) {$H^{(1)}$};
\node at (2,-1) {$H^{(2)}$};
\end{tikzpicture}
\caption{Example of twin subgraphs}
\label{fig:twinsubgraphs}
\end{figure}
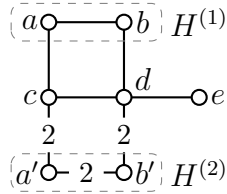

In general we say that $H^{(1)}$ and $H^{(2)}$ are \emph{twin subgraphs} of the graph $G$ if there is a partition of the vertices of $G$ as $V(G)=V_1\cup V_2\cup V_3$, a bijective map $\pi:V_1\to V_2$, and a fixed $\alpha>0$ which satisfy the following two properties:
\begin{itemize}
\item $G$ restricted to $V_1$ is $H^{(1)}$, $G$ restricted to $V_2$ is $H^{(2)}$, $w(u,v)=0$ for all $u\in V_1$ and $v\in V_2$, and $\alpha H^{(1)}= \pi(H^{(2)})$;
\item for each $u\in V_1$ and $t\in V_3$ we have $\alpha w(u,t)= w(\pi(u),t)$.
\end{itemize}

The first condition states that we have two graphs which are disjoint and not connected by an edge which agree up to some fixed scaling factor; the second condition states that the way that these two subgraphs connect with the remainder of the graph (i.e., vertices in $V_3$) also agree up to the same fixed scaling factor.

Applying this definition to the graph shown in Figure~\ref{fig:twinsubgraphs} we have that $V_1=\{a,b\}$, $V_2=\{a',b'\}$, $V_3=\{c,d,e\}$, $\pi(a)=a'$, $\pi(b)=b'$ and $\alpha=2$ so that $H^{(1)}$ and $H^{(2)}$ are twin subgraphs.  

Our goal is to combine the twins together and in the process reduce the size of the graph.  So given a graph $G$ with twin subgraphs $H^{(1)}$ and $H^{(2)}$ we will consider the following three graphs:
\begin{itemize}
\item $\widehat{G}$ formed by deleting the vertices $V_2$ and for each  $u\in V_1$ we set $w(u)=(1+\alpha)w(u)$ and for all $t\in V_1\cup V_3$ we set $w(u,t)=(1+\alpha)w(u,t)$;
\item $\widehat{H^{(i)}}$ formed by restricting $G$ to $H^{(i)}$ and then creating a new weight function $w'$ on the vertices by the following $w'(u)=w(u)+\sum_{t\in V_3}w(u,t)$.
\end{itemize}
The graph $\widehat{G}$ can be thought of the graph that is formed when $H^{(1)}$ and $H^{(2)}$ are combined together in $G$.  Note that by definition $\alpha \widehat{H^{(1)}}= \widehat{H^{(2)}}$ and so these graphs are cospectral and can be used interchangeably in the arguments below.  In Figure~\ref{fig:threegraphs} we give the three graphs which come from Figure~\ref{fig:twinsubgraphs}; this is our first time when vertex weights have come into play and we have marked these at a vertex by using a square box.

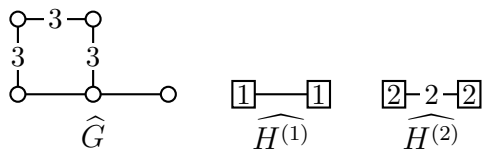
\begin{figure}[htb]
\centering
\begin{tikzpicture}
\draw[thick] (1,0)--(1,1)--(0,1)--(0,0)--(2,0);
\draw[thick] (3,0)--(4,0);
\draw[thick] (5,0)--(6,0);

\vertex{(0,0)}{(0,0)}
\vertex{(1,1)}{(0,0)}
\vertex{(1,0)}{(0,0)}
\vertex{(0,1)}{(0,0)}
\vertex{(2,0)}{(0,0)}

\node[fill=white,thick,draw,rectangle,inner sep=1.5pt] at (3,0) {\small$1$};
\node[fill=white,thick,draw,rectangle,inner sep=1.5pt] at (4,0) {\small$1$};
\node[fill=white,thick,draw,rectangle,inner sep=1.5pt] at (5,0) {\small$2$};
\node[fill=white,thick,draw,rectangle,inner sep=1.5pt] at (6,0) {\small$2$};
\node[fill=white,inner sep=1.5pt] at (5.5,0) {\small$2$};
\node[fill=white,inner sep=1.5pt] at (0.5,1) {\small$3$};
\node[fill=white,inner sep=1.5pt] at (0,0.5) {\small$3$};
\node[fill=white,inner sep=1.5pt] at (1,0.5) {\small$3$};

\node at (1,-0.5) {$\widehat{G}$};
\node at (3.5,-0.5) {$\widehat{H^{(1)}}$};
\node at (5.5,-0.5) {$\widehat{H^{(2)}}$};

\end{tikzpicture}
\caption{The three graphs coming from Figure~\ref{fig:twinsubgraphs}}
\label{fig:threegraphs}
\end{figure}

We are now ready to give our main result.

\begin{theorem}\label{thm:twins}
Suppose that $H^{(1)}$ and $H^{(2)}$ are twin subgraphs of $G$.  Then the eigenvalues (counting multiplicity) of the normalized Laplacian of $G$ is given by the union ot the eigenvalues of the normalized Laplacian of $\widehat{G}$ and the eigenvalues of the normalized Laplacian of $\widehat{H^{(1)}}$.
\end{theorem}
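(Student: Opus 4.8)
My plan is to pass to the generalized eigenvalue problem for the random walk, which the introduction already links to $\mathcal{L}$: a scalar $\lambda$ is an eigenvalue of $\mathcal{L}_G$ with eigenvector $f$ exactly when $g=D^{-1/2}f$ satisfies $Ag=(1-\lambda)Dg$, i.e.\ $g$ is an eigenvector of the random walk matrix $D^{-1}A$ with eigenvalue $\mu=1-\lambda$. Writing $\mu=1-\lambda$ throughout, I index the vertices by the three blocks $V_1,V_2,V_3$, write $g=(g_1,g_2,g_3)$, and put $A,D$ in the corresponding block form. Using $\pi$ to identify $V_2$ with $V_1$, the twin-subgraph hypotheses translate into the clean relations $A_{12}=0$, $A_{22}=\alpha A_{11}$, $A_{23}=\alpha A_{13}$, $A_{32}=\alpha A_{31}$, and (after folding the edge and vertex weights into the degree) $D_2=\alpha D_1$. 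I would record these first, together with the degree identities $d_{\widehat G}(u)=(1+\alpha)d_G(u)$ and $d_{\widehat{H^{(1)}}}(u)=d_G(u)$ for $u\in V_1$, and $d_{\widehat G}(t)=d_G(t)$ for $t\in V_3$; all are immediate from the definitions of $\widehat G$ and $\widehat{H^{(1)}}$, whose adjacency blocks are $(1+\alpha)A_{11},(1+\alpha)A_{13},A_{33}$ and $A_{11}$ respectively.

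The heart of the argument is to block-diagonalise the eigenproblem using the $V_1\leftrightarrow V_2$ symmetry. Writing $\tilde g_2=g_2\circ\pi$ for the transplant of $g_2$ onto $V_1$, the equations $Ag=\mu Dg$ on blocks $1$ and $2$ become, after dividing the second by $\alpha$, the \emph{same} equation $A_{11}x+A_{13}g_3=\mu D_1 x$ for $x=g_1$ and for $x=\tilde g_2$, while the block-$3$ equation reads $A_{31}(g_1+\alpha\tilde g_2)+A_{33}g_3=\mu D_3 g_3$. This makes two families of solutions transparent. In the \emph{symmetric} case $g_1=\tilde g_2$, the block-$3$ coupling acquires the factor $1+\alpha$, and $(g_1,g_3)$ is seen to satisfy $A_{\widehat G}(g_1,g_3)=\mu D_{\widehat G}(g_1,g_3)$; that is, it is precisely a random-walk eigenvector of $\widehat G$ with the same $\mu$. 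In the \emph{antisymmetric} case I set $g_3=0$ and $\tilde g_2=-\tfrac1\alpha g_1$, which annihilates the factor $g_1+\alpha\tilde g_2$ and leaves exactly $A_{11}g_1=\mu D_1 g_1$, the random-walk eigenproblem for $\widehat{H^{(1)}}$ (adjacency $A_{11}$, degrees $D_1$).

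I thus obtain two injective linear lifts. The map $\phi$ carries each random-walk eigenvector $(h_1,h_3)$ of $\widehat G$ to the $G$-eigenvector with $g_1=\tilde g_2=h_1$, $g_3=h_3$, preserving $\mu$ (hence $\lambda$); the map $\psi$ carries each random-walk eigenvector $x$ of $\widehat{H^{(1)}}$ to the $G$-eigenvector with $g_1=x$, $\tilde g_2=-\tfrac1\alpha x$, $g_3=0$, again preserving the eigenvalue. To conclude, I must check these together form a full eigenbasis of $\mathcal{L}_G$. The images of $\phi$ lie in $S=\{g_1=\tilde g_2\}$ and those of $\psi$ in $T=\{\tilde g_2=-\tfrac1\alpha g_1,\ g_3=0\}$; a one-line computation gives $S\cap T=\{0\}$ (from $(1+\alpha)\tilde g_2=0$ with $\alpha>0$), while $\dim S+\dim T=(|V_1|+|V_3|)+|V_1|=|V(G)|$, so $S\oplus T$ exhausts the space. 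In the coordinates $f=D^{1/2}g$ these two subspaces are in fact orthogonal, which is the cleanest way to see the decomposition is complete. Since $\phi,\psi$ are injective, send eigenvectors to eigenvectors preserving eigenvalues, and have independent images spanning everything, the multiset of eigenvalues of $\mathcal{L}_G$ is the union of those of $\widehat G$ and of $\widehat{H^{(1)}}$, as claimed; replacing $\widehat{H^{(1)}}$ by $\widehat{H^{(2)}}$ is harmless since these are cospectral by Proposition~\ref{prop:scaling}.

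I expect the only genuine obstacle to be bookkeeping rather than ideas: one must verify carefully that the adjacency and degree blocks of $\widehat G$ and $\widehat{H^{(1)}}$ are exactly the $(1+\alpha)$- and identity-rescalings of the corresponding pieces of $G$, so that the lifted equations coincide with the target eigenproblems on the nose, and that the single constant $-\tfrac1\alpha$ is forced in the antisymmetric case. The conceptual content — that the symmetric combination reproduces the coalesced graph $\widehat G$ while the one antisymmetric combination $\tilde g_2=-g_1/\alpha$ decouples from $V_3$ and reproduces $\widehat{H^{(1)}}$ — is the natural generalization of the scaled twin vector $e_u-\tfrac1\alpha e_v$ behind Proposition~\ref{prop:twins}, and I would note in passing that taking $H^{(1)},H^{(2)}$ to be single vertices recovers that proposition, since there $\widehat{H^{(1)}}$ is a single weighted vertex contributing the lone eigenvalue $1$.
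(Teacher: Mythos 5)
Your proposal is correct and is essentially the paper's own proof: the "random-walk eigenvectors" $g$ with $Ag=\mu Dg$ are exactly the paper's harmonic eigenvectors, your symmetric lift from $\widehat G$ and antisymmetric lift from $\widehat{H^{(1)}}$ (yours scaled by $1/\alpha$ relative to the paper's $(\alpha\widehat{\mathbf{y}},-\widehat{\mathbf{y}},0)$) are the same two families, and completeness via $D$-orthogonality plus a dimension count is the same closing argument. The only difference is cosmetic: you organize the verification in block-matrix form, while the paper checks the harmonic equation vertex-by-vertex on $V_1$, $V_2$, $V_3$.
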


Before we begin the proof we introduce \emph{harmonic eigenvectors} for the normalized Laplacian matrix (see \cite{grout,chung}).  In particular, if $\mathcal{L}\mathbf{x}=\lambda\mathbf{x}$ then the harmonic eigenvector corresponding to $\lambda$ is $\mathbf{y}=D^{-1/2}\mathbf{x}$.  Note with this convention we have $(D-A)\mathbf{y}=\lambda D\mathbf{y}$, or rearranging, $A\mathbf{y}=(1-\lambda)D\mathbf{y}$.  So the  requirement for a harmonic eigenvector is that at each vertex $u$ the following is satisfied
\begin{equation}\label{eq:harmony}
\sum_{\substack{ v\\v{\sim}u}}w(u,v)\mathbf{y}(v)=(1-\lambda)d(u)\mathbf{y}(u).
\end{equation}
(Recall that $d(u)=w(u)+\sum_{u{\sim}v}w(u,v)$.)  Further, we have that two harmonic eigenvectors $\mathbf{y}$ and $\mathbf{z}$ are perpendicular if and only if the eigenvectors which they correspond to are perpendicular.  This translates to requiring that $D^{1/2}\mathbf{y}$ is perpendicular to $D^{1/2}\mathbf{z}$, or in other words
\[
0=(D^{1/2}\mathbf{y})^T(D^{1/2}\mathbf{z})=\mathbf{y}^TD\mathbf{z}=\sum_ud(u)\mathbf{y}(u)\mathbf{z}(u).
\]

\begin{proof}
We will show how to lift harmonic eigenvectors from $\widehat{G}$ and $\widehat{H^{(1)}}$ to harmonic eigenvectors of $G$ for the same corresponding eigenvalue.  Further the two families of harmonic eigenvectors thus produced are perpendicular and so by dimension arguments we will have found all of the eigenvalues of $G$ and produced the result.

So suppose that $\widehat{\mathbf{x}}$ is a harmonic eigenvector of $\widehat{G}$ for the eigenvalue $\lambda$.  Now consider the vector $\mathbf{x}$ defined as follows:
\[
\mathbf{x}(u)=\left\{\begin{array}{r@{\quad}l}
\widehat{\mathbf{x}}(u)&\text{if }u\in V_1\cup V_3,\\
\widehat{\mathbf{x}}(\pi^{-1}(u))&\text{if }u\in V_2.
\end{array}\right.
\]
We claim that $\mathbf{x}$ is a harmonic eigenvector of $G$ for the eigenvalue $\lambda$, to verify this we consider what happens at a vertex $u$.
\begin{itemize}
\item $u\in V_3$.  Note that for $v\in V_1$ that $w_{\widehat{G}}(u,v)=(1+\alpha)w_G(u,v)=w_G(u,v)+w_G(u,\pi(v))$ while for $v\in V_3$ that $w_{\widehat{G}}(u,v)=w_G(u,v)$.  Further a vertex in $V_3$ has the same degree both in $G$ and $\widehat{G}$.  Therefore we have
\[
\sum_{\substack{ v\\v{\sim}u}}w_G(u,v)\mathbf{x}(v)=
\sum_{\substack{ v\\v{\sim}u}}w_{\widehat{G}}(u,v)\widehat{\mathbf{x}}(v)=
(1-\lambda)d_{\widehat{G}}(u)\widehat{\mathbf{x}}(u)=
(1-\lambda)d_{G}(u)\mathbf{x}(u).
\]
\item $u\in V_1$.  Note that for all $v$ that $w_G(u,v)=\frac1{1+\alpha}w_{\widehat{G}}(u,v)$ and that $d_G(u)=\frac1{1+\alpha}d_{\widehat{G}}(u)$.  Therefore we have
\begin{multline*}
\sum_{\substack{ v\\v{\sim}u}}w_G(u,v)\mathbf{x}(v)=
\frac1{1+\alpha}\sum_{\substack{ v\\v{\sim}u}}w_{\widehat{G}}(u,v)\widehat{\mathbf{x}}(v)=
\frac1{1+\alpha}(1-\lambda)d_{\widehat{G}}(u)\widehat{\mathbf{x}}(u)\\=
(1-\lambda)d_{G}(u)\mathbf{x}(u).
\end{multline*}
\item $u\in V_2$.  This case follows similarly from what was done for the $u\in V_1$ case.
\end{itemize}

Now suppose that $\widehat{\mathbf{y}}$ is a harmonic eigenvector of $\widehat{H^{(1)}}$ for the eigenvalue $\lambda$.  Now consider the vector $\mathbf{y}$ defined as follows:
\[
\mathbf{y}(u)=\left\{\begin{array}{r@{\quad}l}
\alpha \widehat{\mathbf{y}}(u)&\text{if }u\in V_1,\\
-\widehat{\mathbf{y}}(\pi^{-1}(u))&\text{if }u\in V_2,\\
0&\text{else}.
\end{array}\right.
\]
We claim that $\mathbf{y}$ is a harmonic eigenvector of $G$ for the eigenvalue $\lambda$, to verify this we consider what happens at a vertex $u$.
\begin{itemize}
\item $u\in V_3$.  By definition for $v\in V_1$ we have that $\alpha w_G(u,v)=w_G(u,\pi(v))$.  Therefore we can conclude
\begin{multline*}
w_G(u,v)\mathbf{y}(v)+w_G(u,\pi(v))\mathbf{y}(\pi(v))=
w_G(u,v)\mathbf{y}(v)+\alpha w_G(u,v)\mathbf{y}(\pi(v))\\=
w_G(u,v)(\alpha \widehat{\mathbf{y}}(v))+\alpha w_G(u,v)(-\widehat{\mathbf{y}}(v))=0.
\end{multline*}
Using this, and that all vertices in $V_3$ (including $u$) are $0$ for $\mathbf{y}$ we can conclude
\[
\sum_{\substack{ v\\v{\sim}u}}w_G(u,v)\mathbf{y}(v)=0=(1-\lambda)d_{G}(u)\mathbf{y}(u)
\]
\item $u\in V_1$.  For a vertex $u\in V_1$ we have that for all $v\in V_1\cup V_3$ $w_G(u,v)=\frac1{1+\alpha}w_{\widehat{G}}(u,v)$ (by definition there are no edges between $V_1$ and $V_2$) and that $d_G(u)=\frac{1}{1+\alpha}d_{\widehat{G}}(u)$.  Therefore we have
\begin{multline*}
\sum_{\substack{ v\\v{\sim}u}}w_G(u,v)\mathbf{y}(v)=
\frac\alpha{1+\alpha}\sum_{\substack{ v\\v{\sim}u}}w_{\widehat{G}}(u,v)\widehat{\mathbf{y}}(v)=
\frac\alpha{1+\alpha}(1-\lambda)d_{\widehat{G}}\widehat{\mathbf{y}}(u)\\
=(1-\lambda)d_G\mathbf{y}(u).
\end{multline*}
\item $u\in V_2$.  This case follows similarly from what was done for the $u\in V_1$ case.
\end{itemize}

Finally we  verify that the resulting vectors are orthogonal.  Now suppose that  $\mathbf{x}$ and $\mathbf{y}$ were harmonic eigenvectors lifted from $\widehat{G}$ and $\widehat{H^{(1)}}$ respectively.  We have $d(u)\mathbf{x}(u)\mathbf{y}(u)$ is trivially $0$ for all $u\in V_3$ (from $\mathbf{y}(u)=0$).  Further we have for $u\in V_1$ that
\[
d(u)\mathbf{x}(u)\mathbf{y}(u)+d(\pi(u))\mathbf{x}(\pi(u))\mathbf{y}(\pi(u))=
d(u)\mathbf{x}(u)\mathbf{y}(u)+\alpha d(u)\mathbf{x}(u)\bigg(-\frac{1}{\alpha}\mathbf{y}(u)\bigg)=0.
\]
This shows the remaining terms can be paired to cancel and these harmonic eigenvectors are orthogonal, which concludes the proof.
\end{proof}

We can now give a proof for the case of twin vertices that came from the preceding section.

\begin{proof}[Proof of Proposition~\ref{prop:twins}]
Let $V_1=\{u\}$, $V_2=\{v\}$, and $V_3$ the remaining vertices.  Then by Theorem~\ref{thm:twins} we now have that the eigenvalues of $G$ are the eigenvalues of $\widehat{G}$ together with the eigenvalues of a graph consisting of a single vertex with no loops and positive vertex weight (i.e., since the vertex twins were not isolated).  For this latter graph we have that $\mathcal{L}=D^{-1/2}(D-A)D^{-1/2}=[1]$ which has eigenvalue $1$.  The result now follows.
\end{proof}

As in the previous section we can now combine twin subgraphs to form cospectral graphs, we now just have a more interesting array of graphs to work with.  A special case of this phenomenon was previously given by Osborne \cite{osborne} wherein the twin subgraphs were $sP_{n+1}$ and $tP_{n+1}$ and they were glued together at a vertex.  In particular Osborne showed that the spectrum was independent of the choice of $s$ and $t$ and then used this to construct cospectral simple graphs.  

Of course several twin subgraphs can be glued together and the theorem still applies.  An example of this is shown in the top of Figure~\ref{fig:triplets} (again to save space we  mark the number of twins inside of each vertex if it corresponds to multiple twins).  We first start by combining vertex twins (reducing by $16$ vertices in each case) to get the graphs shown in the bottom of Figure~\ref{fig:triplets}.  Now in both cases we have three subgraphs which are mutual twins gluing together at a central vertex.  We can combine these graphs together to conclude (after appropriate scaling) that the remaining eigenvalues come from a path of length $4$ and two copies of a path of length $3$ with one end of the path having a vertex weight of $1$.  This example shows it is possible to iteratively combine twins to form cospectral graphs.  

\begin{figure}[hftb]
\centering
\begin{tikzpicture}[scale=1]
\draw[thick] (0,0)--(90:2.4);
\draw[thick] (0,0)--(-30:2.4);
\draw[thick] (0,0)--(210:2.4);
\draw[thick, shift={(6,0)}] (0,0)--(90:2.4);
\draw[thick, shift={(6,0)}] (0,0)--(-30:2.4);
\draw[thick, shift={(6,0)}] (0,0)--(210:2.4);
\vertex{(0,0)}{(0,0)}
\node[thick, fill=white, draw=black, circle, inner sep=1pt, shift={(0,0)}] at (90:.8) {\small $2$};
\vertex{(90:1.6)}{(0,0)}
\node[thick, fill=white, draw=black, circle, inner sep=1pt, shift={(0,0)}] at (90:2.4) {\small $2$};
\node[thick, fill=white, draw=black, circle, inner sep=1pt, shift={(0,0)}] at (-30:.8) {\small $4$};
\vertex{(-30:1.6)}{(0,0)}
\node[thick, fill=white, draw=black, circle, inner sep=1pt, shift={(0,0)}] at (-30:2.4) {\small $4$};
\node[thick, fill=white, draw=black, circle, inner sep=1pt, shift={(0,0)}] at (210:.8) {\small $5$};
\vertex{(210:1.6)}{(0,0)}
\node[thick, fill=white, draw=black, circle, inner sep=1pt, shift={(0,0)}] at (210:2.4) {\small $5$};
\vertex{(0,0)}{(6,0)}
\vertex{(90:.8)}{(6,0)}
\vertex{(90:1.6)}{(6,0)}
\vertex{(90:2.4)}{(6,0)}
\node[thick, fill=white, draw=black, circle, inner sep=1pt, shift={(6,0)}] at (-30:.8) {\small $3$};
\vertex{(-30:1.6)}{(6,0)}
\node[thick, fill=white, draw=black, circle, inner sep=1pt, shift={(6,0)}] at (-30:2.4) {\small $3$};
\node[thick, fill=white, draw=black, circle, inner sep=1pt, shift={(6,0)}] at (210:.8) {\small $7$};
\vertex{(210:1.6)}{(6,0)}
\node[thick, fill=white, draw=black, circle, inner sep=1pt, shift={(6,0)}] at (210:2.4) {\small $7$};
\end{tikzpicture}
\bigskip
\bigskip
\begin{tikzpicture}[scale=1]
\draw[thick] (0,0)--(90:2.4);
\draw[thick] (0,0)--(-30:2.4);
\draw[thick] (0,0)--(210:2.4);
\draw[thick, shift={(6,0)}] (0,0)--(90:2.4);
\draw[thick, shift={(6,0)}] (0,0)--(-30:2.4);
\draw[thick, shift={(6,0)}] (0,0)--(210:2.4);
\vertex{(0,0)}{(0,0)}
\vertex{(90:0.8)}{(0,0)}
\vertex{(90:1.6)}{(0,0)}
\vertex{(90:2.4)}{(0,0)}
\vertex{(210:0.8)}{(0,0)}
\vertex{(210:1.6)}{(0,0)}
\vertex{(210:2.4)}{(0,0)}
\vertex{(-30:0.8)}{(0,0)}
\vertex{(-30:1.6)}{(0,0)}
\vertex{(-30:2.4)}{(0,0)}
\vertex{(0,0)}{(6,0)}
\vertex{(90:0.8)}{(6,0)}
\vertex{(90:1.6)}{(6,0)}
\vertex{(90:2.4)}{(6,0)}
\vertex{(210:0.8)}{(6,0)}
\vertex{(210:1.6)}{(6,0)}
\vertex{(210:2.4)}{(6,0)}
\vertex{(-30:0.8)}{(6,0)}
\vertex{(-30:1.6)}{(6,0)}
\vertex{(-30:2.4)}{(6,0)}

\node[fill=white, inner sep=1.5pt] at (90:0.4) {\small$2$};
\node[fill=white, inner sep=1.5pt] at (90:1.2) {\small$2$};
\node[fill=white, inner sep=1.5pt] at (90:2.0) {\small$2$};
\node[fill=white, inner sep=1.5pt] at (210:0.4) {\small$5$};
\node[fill=white, inner sep=1.5pt] at (210:1.2) {\small$5$};
\node[fill=white, inner sep=1.5pt] at (210:2.0) {\small$5$};
\node[fill=white, inner sep=1.5pt] at (-30:0.4) {\small$4$};
\node[fill=white, inner sep=1.5pt] at (-30:1.2) {\small$4$};
\node[fill=white, inner sep=1.5pt] at (-30:2.0) {\small$4$};

\node[fill=white, inner sep=1.5pt, shift={(6,0)}] at (210:0.4) {\small$7$};
\node[fill=white, inner sep=1.5pt, shift={(6,0)}] at (210:1.2) {\small$7$};
\node[fill=white, inner sep=1.5pt, shift={(6,0)}] at (210:2.0) {\small$7$};
\node[fill=white, inner sep=1.5pt, shift={(6,0)}] at (-30:0.4) {\small$3$};
\node[fill=white, inner sep=1.5pt, shift={(6,0)}] at (-30:1.2) {\small$3$};
\node[fill=white, inner sep=1.5pt, shift={(6,0)}] at (-30:2.0) {\small$3$};

\end{tikzpicture}
\caption{Two cospectral graphs arising from having triplet subgraphs}
\label{fig:triplets}
\end{figure}
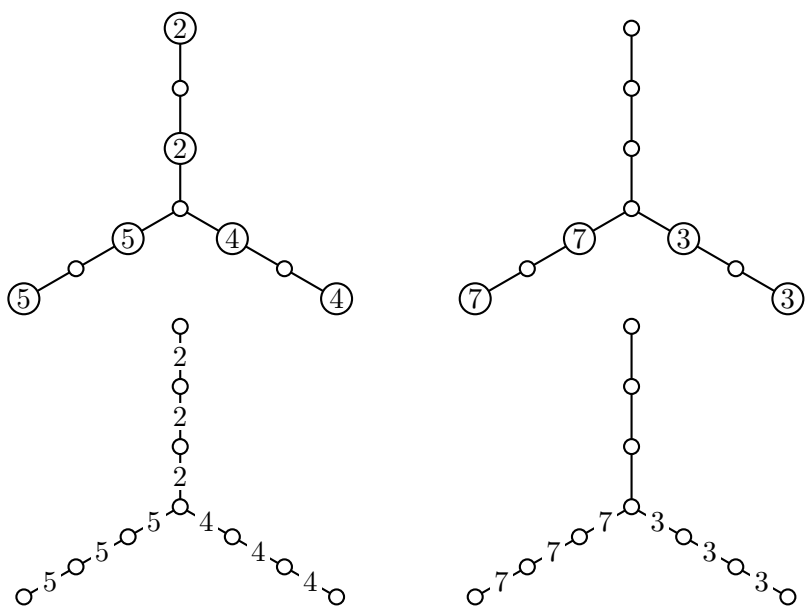

We note that Theorem~\ref{thm:twins} shows we could connect the same arbitrary subgraph to the center vertices of the pair of graphs in Figure~\ref{fig:triplets} and the resulting graphs would still be cospectral.  A very elementary form of this method for constructing cospectral graphs was given in Butler and Grout \cite{grout} for constructing non-isomorphic cospectral graphs for the normalized Laplacian, where the ``rays'' from the central vertex only went out to distance two.

The examples mentioned so far rely on gluing graphs at one vertex, but other configurations can arise.  In the next section we will give an example of twin subgraphs where the gluing happens at two distinct vertices.

\section{Graphs cospectral with their subgraph}\label{sec:subgraph}
Since we can construct graphs which are cospectral and have differing number of edges we have the possibility that there are graphs which are cospectral with one of their subgraphs for the normalized Laplacian matrix.  A handful of examples of this type were given by Butler and Grout \cite{grout}, and here we generalize two of these to form an infinite sequence of examples of this behavior.  These graphs also show that it is possible for a dense graph to be cospectral with a sparse subgraph.

Our first construction is shown in the top of Figure~\ref{fig:subgraph1} where we again follow the convention of marking any twins by indicating that with a label at the vertex where we want to take twin copies.  In particular the difference between the two graphs is a removal of a $K_{k,k}$ represented by the edge on the bottom. 

\begin{figure}[hftb]
\centering
\begin{tikzpicture}[scale=1.25]
\draw[thick] (0,2)--(0,0)--(2,0)--(2,2)--(0,2)--(1,3)--(2,2);
\draw[thick] (6,0)--(6,2)--(8,2)--(8,0);
\draw[thick] (6,2)--(7,3)--(8,2);
\node[thick,fill=white,draw=black,circle,inner sep=1pt,shift={(0,0)}] at (0,0) {\large $k$};
\node[thick,fill=white,draw=black,circle,inner sep=1pt,shift={(0,0)}] at (2,0) {\large $k$};
\vertex{(2,2)}{(0,0)}
\vertex{(0,2)}{(0,0)}
\node[thick,fill=white,draw=black,circle,inner sep=1pt,shift={(0,0)}] at (1,3) {\scriptsize $k+1$};
\node[thick,fill=white,draw=black,circle,inner sep=1pt,shift={(0,0)}] at (6,0) {\large $k$};
\node[thick,fill=white,draw=black,circle,inner sep=1pt,shift={(0,0)}] at (8,0) {\large $k$};
\vertex{(8,2)}{(0,0)}
\vertex{(6,2)}{(0,0)}
\node[thick,fill=white,draw=black,circle,inner sep=1pt,shift={(0,0)}] at (7,3) {\scriptsize $k+1$};
\end{tikzpicture}

\bigskip\bigskip

\begin{tikzpicture}[scale=1.25]
\draw[thick] (0,2)--(0,0)--(2,0)--(2,2)--(0,2)--(1,3)--(2,2);
\draw[thick, shift={(0,0)}] (6,0)--(6,2)--(8,2)--(8,0);
\draw[thick, shift={(0,0)}] (6,2)--(7,3)--(8,2);
\vertex{(0,0)}{(0,0)}
\vertex{(0,2)}{(0,0)}
\vertex{(2,2)}{(0,0)}
\vertex{(2,0)}{(0,0)}
\vertex{(1,3)}{(0,0)}
\vertex{(6,0)}{(0,0)}
\vertex{(6,2)}{(0,0)}
\vertex{(8,2)}{(0,0)}
\vertex{(8,0)}{(0,0)}
\vertex{(7,3)}{(0,0)}
\node[rectangle, fill=white, inner sep=2pt] at (0,1) {\small $k$};
\node[rectangle, fill=white, inner sep=2pt] at (2,1) {\small $k$};
\node[rectangle, fill=white, inner sep=1pt] at (0.5,2.5) {\small $k+1$};
\node[rectangle, fill=white, inner sep=1pt] at (1.5,2.5) {\small $k+1$};
\node[rectangle, fill=white, inner sep=2pt] at (1,2) {\small $1$};
\node[rectangle, fill=white, inner sep=2pt] at (1,0) {\small $k^2$};
\node[rectangle, fill=white, inner sep=2pt, shift={(0,0)}] at (6,1) {\small $k$};
\node[rectangle, fill=white, inner sep=2pt, shift={(0,0)}] at (8,1) {\small $k$};
\node[rectangle, fill=white, inner sep=1pt, shift={(0,0)}] at (6.5,2.5) {\small $k+1$};
\node[rectangle, fill=white, inner sep=1pt, shift={(0,0)}] at (7.5,2.5) {\small $k+1$};
\node[rectangle, fill=white, inner sep=2pt, shift={(0,0)}] at (7,2) {\small $1$};
\end{tikzpicture}
\caption{A pair of cospectral graphs, one a subgraph of the other, and their coalescing}
\label{fig:subgraph1}
\end{figure}
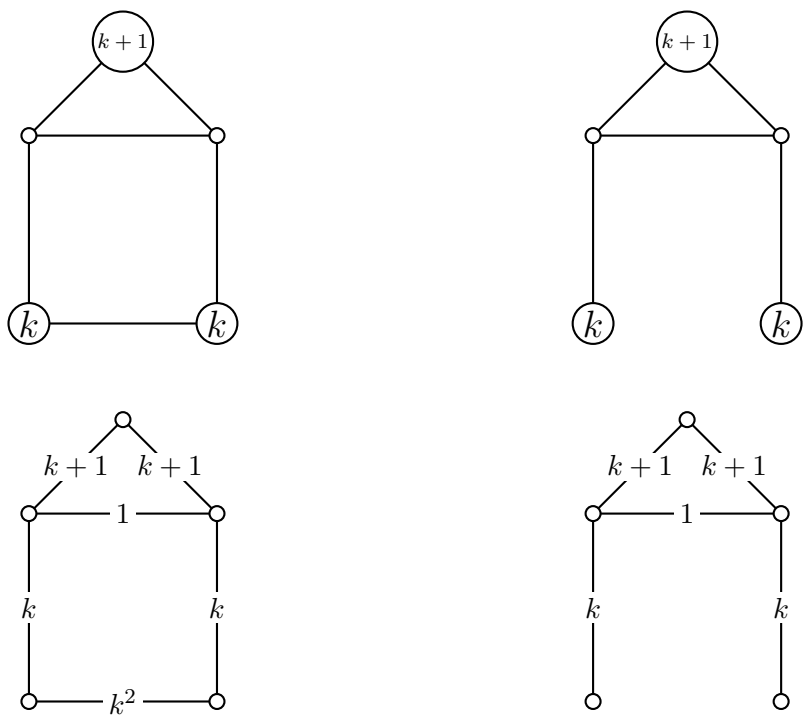

For both graphs in Figure~\ref{fig:subgraph1} we have removed $3k-2$ vertices when coalescing so that the spectrum will consist of $1$ with multiplicity $3k-2$ in addition to the spectrum of the coalesced graphs shown at the bottom of Figure~\ref{fig:subgraph1}.  These are graphs on 5 vertices and so we can write down explicitly the normalized Laplacian for these two matrices, but as already noted the matrices are a transformation of the probability transition matrix (i.e., $D^{-1}A$) and so we instead write these down here (where the vertices are labeled going from top to bottom and left to right).
\[
\left[\begin{array}{ccccc}
0&\frac12&\frac12&0&0\\
\frac12&0&\frac1{2k+2}&\frac{k}{2k+2}&0\\
\frac12&\frac1{2k+2}&0&0&\frac{k}{2k+2}\\
0&\frac1{k+1}&0&0&\frac{k}{k+1}\\
0&0&\frac1{k+1}&\frac{k}{k+1}&0
\end{array}\right]\qquad\qquad
\left[\begin{array}{ccccc}
0&\frac12&\frac12&0&0\\
\frac12&0&\frac1{2k+2}&\frac{k}{2k+2}&0\\
\frac12&\frac1{2k+2}&0&0&\frac{k}{2k+2}\\
0&1&0&0&0\\
0&0&1&0&0
\end{array}\right]
\]
The characteristic polynomial for \emph{both} of these matrices is
\[
x^5-\frac{6k^2+8k+3}{4(k+1)^2}x^3-\frac{1}{4(k+1)}x^2+\frac{k(2k+1)}{4(k+1)^2}x.
\]
In particular the remaining five eigenvalues to determine for the original graph must also agree, showing that the graphs are cospectral.

The second construction is similar but instead of only having twin vertices we also have twin subgraphs.  An example of a pair of such graphs for the special case $k=2$ is shown in Figure~\ref{fig:examplepairs}.  The general construction is shown at the top in Figure~\ref{fig:subgraph2} where we take the indicated number of copies of the ``edge'' inside of the dashed rectangle.  By coalescing the various twins, equal in number and type in both graphs, we reduce down to the $6$ vertex graphs with appropriate edge weights shown at the bottom of Figure~\ref{fig:subgraph2}.

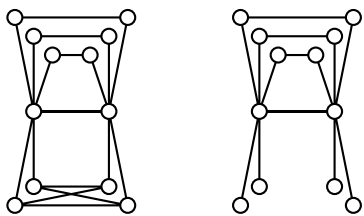
\begin{figure}[hftb]
\centering
\begin{tikzpicture}
\draw[thick] (0,0)--(1,0)--(1,1)--(0,1)--(0,0)--(1,0)--(0.75,0.75)--(0.25,0.75)--(0,0)--(1,0)--(1.25,1.25)--(-0.25,1.25)--(0,0);
\draw[thick] (-0.25,-1.25)--(0,0)--(0,-1);
\draw[thick] (1.25,-1.25)--(1,0)--(1,-1);
\draw[thick] (-0.25,-1.25)--(1.25,-1.25)--(0,-1)--(1,-1)--cycle;
\vertex{(0,0)}{(0,0)}
\vertex{(1,0)}{(0,0)}
\vertex{(1,1)}{(0,0)}
\vertex{(0,1)}{(0,0)}
\vertex{(0.75,0.75)}{(0,0)}
\vertex{(0.25,0.75)}{(0,0)}
\vertex{(1.25,1.25)}{(0,0)}
\vertex{(-.25,1.25)}{(0,0)}
\vertex{(1,-1)}{(0,0)}
\vertex{(0,-1)}{(0,0)}
\vertex{(-.25,-1.25)}{(0,0)}
\vertex{(1.25,-1.25)}{(0,0)}

\draw[thick, shift={(3,0)}] (0,0)--(1,0)--(1,1)--(0,1)--(0,0)--(1,0)--(0.75,0.75)--(0.25,0.75)--(0,0)--(1,0)--(1.25,1.25)--(-0.25,1.25)--(0,0);
\draw[thick, shift={(3,0)}] (-0.25,-1.25)--(0,0)--(0,-1);
\draw[thick, shift={(3,0)}] (1.25,-1.25)--(1,0)--(1,-1);
\vertex{(0,0)}{(3,0)}
\vertex{(1,0)}{(3,0)}
\vertex{(1,1)}{(3,0)}
\vertex{(0,1)}{(3,0)}
\vertex{(0.75,0.75)}{(3,0)}
\vertex{(0.25,0.75)}{(3,0)}
\vertex{(1.25,1.25)}{(3,0)}
\vertex{(-.25,1.25)}{(3,0)}
\vertex{(1,-1)}{(3,0)}
\vertex{(0,-1)}{(3,0)}
\vertex{(-.25,-1.25)}{(3,0)}
\vertex{(1.25,-1.25)}{(3,0)}
\end{tikzpicture}
\caption{An example of cospectral graphs with one a subgraph of the other}
\label{fig:examplepairs}
\end{figure}

\begin{figure}[hftb]
\centering
\begin{tikzpicture}[scale=1]
\draw[thick] (0,2)--(0,0)--(2,0)--(2,2)--(0,2)--(0,4)--(2,4)--(2,2);
\draw[thick, shift={(0,0)}] (6,0)--(6,2)--(8,2)--(8,0);
\draw[thick, shift={(0,0)}] (6,2)--(6,4)--(8,4)--(8,2);
\draw[thick, rounded corners, dashed] (-0.25,3.75)--(2.25,3.75)--(2.25,4.25)--(-0.25,4.25)--cycle;
\draw[thick, rounded corners, dashed, shift={(6,0)}] (-0.25,3.75)--(2.25,3.75)--(2.25,4.25)--(-0.25,4.25)--cycle;
\node[] at (1,4.5) {$\times(k+1)$};
\node[shift={(0,0)}] at (7,4.5) {$\times(k+1)$};
\vertex{(0,4)}{(0,0)}
\vertex{(2,4)}{(0,0)}
\vertex{(6,4)}{(0,0)}
\vertex{(8,4)}{(0,0)}
\node[thick,fill=white,draw=black,circle,inner sep=1pt,shift={(0,0)}] at (0,0) {\large $k$};
\node[thick,fill=white,draw=black,circle,inner sep=1pt,shift={(0,0)}] at (2,0) {\large $k$};
\vertex{(2,2)}{(0,0)}
\vertex{(0,2)}{(0,0)}
\node[thick,fill=white,draw=black,circle,inner sep=1pt,shift={(0,0)}] at (6,0) {\large $k$};
\node[thick,fill=white,draw=black,circle,inner sep=1pt,shift={(0,0)}] at (8,0) {\large $k$};
\vertex{(8,2)}{(0,0)}
\vertex{(6,2)}{(0,0)}
\end{tikzpicture}

\bigskip\bigskip

\begin{tikzpicture}[scale=1]
\draw[thick] (0,2)--(0,0)--(2,0)--(2,2)--(0,2)--(0,4)--(2,4)--(2,2);
\draw[thick, shift={(0,0)}] (6,0)--(6,2)--(8,2)--(8,0);
\draw[thick, shift={(0,0)}] (6,2)--(6,4)--(8,4)--(8,2);
\vertex{(0,0)}{(0,0)}
\vertex{(0,2)}{(0,0)}
\vertex{(2,2)}{(0,0)}
\vertex{(2,0)}{(0,0)}
\vertex{(0,4)}{(0,0)}
\vertex{(2,4)}{(0,0)}
\vertex{(6,0)}{(0,0)}
\vertex{(6,2)}{(0,0)}
\vertex{(8,2)}{(0,0)}
\vertex{(8,0)}{(0,0)}
\vertex{(6,4)}{(0,0)}
\vertex{(8,4)}{(0,0)}
\node[rectangle, fill=white,  inner sep=2pt] at (0,1) {\small $k$};
\node[rectangle, fill=white,  inner sep=2pt] at (2,1) {\small $k$};
\node[rectangle, fill=white,  inner sep=1pt] at (0,3) {\small $k+1$};
\node[rectangle, fill=white,  inner sep=1pt] at (1,4) {\small $k+1$};
\node[rectangle, fill=white,  inner sep=1pt] at (2,3) {\small $k+1$};
\node[rectangle, fill=white,  inner sep=2pt] at (1,2) {\small $1$};
\node[rectangle, fill=white,  inner sep=2pt] at (1,0) {\small $k^2$};
\node[rectangle, fill=white,  inner sep=2pt, shift={(0,0)}] at (6,1) {\small $k$};
\node[rectangle, fill=white,  inner sep=2pt, shift={(0,0)}] at (8,1) {\small $k$};
\node[rectangle, fill=white,  inner sep=1pt, shift={(0,0)}] at (6,3) {\small $k+1$};
\node[rectangle, fill=white,  inner sep=1pt, shift={(0,0)}] at (7,4) {\small $k+1$};
\node[rectangle, fill=white,  inner sep=1pt, shift={(0,0)}] at (8,3) {\small $k+1$};
\node[rectangle, fill=white, inner sep=2pt, shift={(0,0)}] at (7,2) {\small $1$};
\end{tikzpicture}
\caption{A pair of cospectral graphs, one a subgraph of the other, and their coalescing}
\label{fig:subgraph2}
\end{figure}

Again we only need to check that the graphs on the bottom of Figure~\ref{fig:subgraph2} are cospectral.  As before we write down the probability transition matrices, which are as follows:
\[
\left[\begin{array}{cccccc}
0&\frac12&\frac12&0&0&0\\
\frac12&0&0&\frac12&0&0\\
\frac12&0&0&\frac1{2k+2}&\frac{k}{2k+2}&0\\
0&\frac12&\frac1{2k+2}&0&0&\frac{k}{2k+2}\\
0&0&\frac1{k+1}&0&0&\frac{k}{k+1}\\
0&0&0&\frac1{k+1}&\frac{k}{k+1}&0
\end{array}\right]\qquad
\left[\begin{array}{cccccc}
0&\frac12&\frac12&0&0&0\\
\frac12&0&0&\frac12&0&0\\
\frac12&0&0&\frac1{2k+2}&\frac{k}{2k+2}&0\\
0&\frac12&\frac1{2k+2}&0&0&\frac{k}{2k+2}\\
0&1&0&0&0\\
0&0&1&0&0
\end{array}\right]
\]
The characteristic polynomial for \emph{both} of these matrices is
\[
x^6-\frac{7k^2+10k+4}{4(k+1)^2}x^4+\frac{k(13k+8)}{16(k+1)^2}x^2-\frac{k^2}{16(k+1)^2}.
\]
Thus these graphs are also cospectral.

\section{Concluding remarks}\label{sec:concluding}
In this paper we have looked at how we can use the properties of scaling and  twins in graphs to produce some examples of cospectral graphs.  In particular, we have seen ways to construct cospectral graphs with differing number of edges for the normalized Laplacian matrix.

While this covers many basic and beautiful cases, there are still many pairs of graphs which are known to be cospectral but for which no current explanation is known (beyond coincidence).  Further progress in understanding these structures will be interesting.  One special case that warrants further exploration is when a graph is cospectral with its subgraph.  The examples given here show that we can pull out ``most'' of the edges of a special graph and not change the spectrum.  However in all known cases when a graph is cospectral with a subgraph, the graph that is pulled out is a bipartite graph.  It would be interesting to find a graph that is cospectral with its subgraph when we pull out a non-bipartite graph or show no such pairing exists.

As  noted in the table at the beginning we cannot form graphs which are cospectral for the adjacency matrix and have differing number of edges.  We can however still coalesce twin vertices, so the obstacle for the approach outlined here is in the scaling. In particular for the adjacency matrix the graph $\alpha G$ is not cospectral with $G$, but rather the eigenvalues all scale by $\alpha$.  This suggests a new notion, namely we say that $G$ and $H$ are \emph{$\alpha$-cospectral with respect to the adjacency matrix} if there is some $\alpha>0$ so that the  eigenvalues of $H$, counting multiplicity, is obtained by scaling  each of the eigenvalues of $G$ by the value $\alpha$.  With this convention it can be shown that the construction of bipartite graphs in Section~\ref{sec:scaling} produce $\alpha$-cospectral graphs. 

By computer experiment, most small graphs which are $\alpha$-cospectral with respect to the adjacency matrix tend to have twin vertices, and as a result tend to have $0$ as an eigenvalue, often with high multiplicity.  However this is not required, and in Figure~\ref{fig:alpha} we give an example of two graphs which are $\alpha$-cospectral with respect to the adjacency matrix but have no eigenvalue of $0$.  (The spectrum for the graph on the left is $-\sqrt{2}^{(4)}, \sqrt{2}^{(4)}, \pm1\pm\sqrt3$ while the spectrum for the graph on the right is $-1^{(4)}, 1^{(4)}, \pm\sqrt{2\pm\sqrt3}$ where exponents indicate multiplicity; in this case we have $\alpha=\sqrt2$.)  It would be interesting to understand some other constructions of $\alpha$-cospectral graphs as well as a more general theory about what $\alpha$-cospectral graphs have in common.

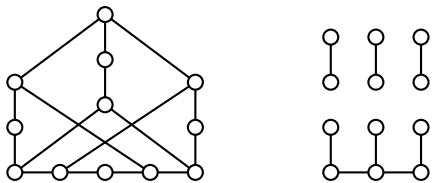
\begin{figure}[hftb]
\centering
\begin{tikzpicture}[scale=0.6]
\draw[thick] (0,0)--(4,0)--(4,2)--(2,3.5)--(0,2)--(0,0)--(2,1.5)--(4,0);
\draw[thick] (0,2)--(3,0);
\draw[thick] (1,0)--(4,2);
\draw[thick] (2,1.5)--(2,3.5);
\vertex{(0,2)}{(0,0)}
\vertex{(0,1)}{(0,0)}
\vertex{(0,0)}{(0,0)}
\vertex{(1,0)}{(0,0)}
\vertex{(2,0)}{(0,0)}
\vertex{(3,0)}{(0,0)}
\vertex{(4,0)}{(0,0)}
\vertex{(4,1)}{(0,0)}
\vertex{(4,2)}{(0,0)}
\vertex{(2,3.5)}{(0,0)}
\vertex{(2,2.5)}{(0,0)}
\vertex{(2,1.5)}{(0,0)}

\draw[thick] (7,1)--(7,0)--(9,0)--(9,1);
\draw[thick] (8,0)--(8,1);
\draw[thick] (7,2)--(7,3);
\draw[thick] (8,2)--(8,3);
\draw[thick] (9,2)--(9,3);
\vertex{(7,1)}{(0,0)}
\vertex{(7,0)}{(0,0)}
\vertex{(9,0)}{(0,0)}
\vertex{(9,1)}{(0,0)}
\vertex{(8,0)}{(0,0)}
\vertex{(8,1)}{(0,0)}
\vertex{(7,2)}{(0,0)}
\vertex{(7,3)}{(0,0)}
\vertex{(8,2)}{(0,0)}
\vertex{(8,3)}{(0,0)}
\vertex{(9,2)}{(0,0)}
\vertex{(9,3)}{(0,0)}
\end{tikzpicture}
\caption{An example of a pair of $\alpha$-cospectral graphs}
\label{fig:alpha}
\end{figure}

\bigskip
{\bf Acknowledgement.}
Much of this paper was developed while the author was visiting the Institute for Pure and Applied Mathematics (IPAM) and Brigham Young University and the author thanks these institutions for their gracious hospitality.  The author also thanks the referees of this paper for their thorough and careful reading which lead to a thorough revision and much better paper.

\end{document}